\newtheorem{theorem}{Theorem}[section]
\theoremstyle{definition}
\newtheorem{proposition}[theorem]{Proposition}
\newcounter{comcount}
\title{The twisted conjugacy problem for pairs of endomorphisms in  nilpotent groups}
\author{V. ROMAN'KOV\footnote{The first author was partially supported by RFBR grant 07-01-00392.},
E. VENTURA}
\begin{document}

\maketitle

\begin{abstract} An algorithm is constructed that, when given an explicit presentation of a finitely generated nilpotent group $G,$ 
decides for any pair of endomorphisms $\varphi , \psi : G \rightarrow G$ and 
any pair of elements $u, v \in G,$  whether or not 
the equation $(x\varphi )u = v (x\psi )$ has a solution $x \in G.$
Thus it is shown that the problem of the title is decidable. Also we present an algorithm that produces a finite set of generators
of the subgroup (equalizer ) $Eq_{\varphi , \psi }(G) \leq  G$ of all elements $u \in G$ such that $u\varphi = u \psi .$
\end{abstract}

\tableofcontents

\medskip
\begin{quote}{\it \scriptsize 2000 Mathematics Subject Classification.
Primary 20F10. Secondary 20F18; 20E45; 20E36. }

\medskip
\noindent {\it \scriptsize Keywords.} {\rm \scriptsize nilpotent
groups, conjugacy classes, twisted conjugacy, endomorphism,
automorphism.}
\end{quote}

\section{Introduction}

Let $G$ be a group, and $u, v \in G.$  Given an endomorphism $\varphi \in End(G),$ one says that
$u $ and $v$ are $\varphi -${\it twisted conjugate}, and one writes
$u \sim_{\varphi } v,$ if and only if there exists $x \in G$ such
that $u = (x\varphi )^{-1}vx,$ or equivalently $(x\varphi )u = v
x.$ More generally, given a pair of endomorphisms $\varphi , \psi \in End(G),$ one says that the elements 
$u, v $ are $\varphi , \psi -${\it twisted conjugate}, 
and one writes $u \sim _{\varphi , \psi } v, $
if and only if there exists an element $x \in G$ such that 

\begin{equation}
\label{eq:1} (x \varphi ) u = v (x \psi ).
\end{equation}

The recognition of twisted conjugacy classes with respect to a given
pair of endomorphisms $\varphi , \psi \in End(G)$   in the case of any
finitely generated nilpotent group $G$ is the main concern of
this paper.

\section{Preliminary results}
\label{se:prel}

It is well known (see, for example, \cite{Kar}, \cite{RemRom})
that most of algorithmic problems for finitely generated nilpotent
groups are decidable. In particular, the standard conjugacy problem is decidable in any finitely
generated nilpotent group \cite{Bl}.
Some undecidable problems exist too. The
endomorphism (see \cite{Rom}) and the epimorphism (see \cite{Rem}) problems
are among them, as well as impossibility to decide if a given
equation has a solution  in a given finitely generated nilpotent
group (see  \cite{Rom}).

For the purpose of this paper we need in the following result of
special interest.

\begin{proposition}
\label{pr:1}
 Let $G$ be a finitely generated nilpotent group. Let
$\varphi , \psi \in End(G)$ be a pair of endomorphisms of $G.$
Then there is an algorithm which finds a finite set of
generators of the subgroup (equalizer) 

\end{proposition}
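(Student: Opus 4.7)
The plan is to proceed by induction on the nilpotency class $c$ of $G$. When $c = 1$, $G$ is a finitely generated abelian group, and in additive notation $\mathrm{Eq}_{\varphi,\psi}(G) = \ker(\varphi - \psi)$. Generators of this kernel can be obtained by bringing the matrix of $\varphi - \psi$ to Smith normal form, after choosing a suitable basis of the torsion-free part plus invariant generators of the torsion part.

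For the inductive step with $c \geq 2$, let $Z = \gamma_c(G)$, the last nontrivial term of the lower central series. Since $Z$ is fully invariant (every endomorphism sends commutators of weight $c$ into $\gamma_c(G)$), both $\varphi$ and $\psi$ restrict to endomorphisms of $Z$ and descend to endomorphisms $\bar\varphi, \bar\psi$ of $\bar G := G / Z$, which has nilpotency class $c - 1$. By induction, compute generators $\bar u_1, \ldots, \bar u_k$ of $\bar E := \mathrm{Eq}_{\bar\varphi, \bar\psi}(\bar G)$. The equalizer $E := \mathrm{Eq}_{\varphi,\psi}(G)$ fits in a short exact sequence whose kernel $E \cap Z = \mathrm{Eq}_{\varphi|_Z, \psi|_Z}(Z)$ is computable by the base case, so it remains to identify the image $\pi(E) \subseteq \bar E$ under the projection $\pi : G \to \bar G$.

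To do this, choose lifts $\tilde u_i \in G$ of the $\bar u_i$ and record $z_i := (\tilde u_i \varphi) \cdot (\tilde u_i \psi)^{-1}$; since $\bar u_i \in \bar E$, these lie in $Z$. Because $Z$ is central, a short calculation shows that the assignment $\bar u \mapsto z_{\bar u} + (\varphi - \psi)(Z)$ is independent of the chosen lift and yields a homomorphism
\[
\delta : \bar E \longrightarrow Z / (\varphi - \psi)(Z),
\]
whose kernel is exactly $\pi(E)$. Evaluate $\delta$ on the generators $\bar u_i$, and use standard finitely generated abelian group routines to compute generators of $\ker \delta$ as explicit words $w_j(\bar u_1, \ldots, \bar u_k)$. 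For each such word, the corresponding lift $w_j(\tilde u_1, \ldots, \tilde u_k)$ satisfies $(w_j\varphi)(w_j\psi)^{-1} = (\varphi - \psi)(t_j)$ for some $t_j \in Z$ that can be found by solving a linear system over the abelian group $Z$; then $w_j t_j^{-1} \in E$. Combined with generators of $E \cap Z$, these elements generate $E$.

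The main obstacle is less the algebraic skeleton above (which is essentially forced) than ensuring that every supporting operation is effective at each level of the recursion: one must algorithmically produce generators of $Z$ (as basic commutators of weight $c$ in the given generators of $G$), maintain a normal form that decides equality in $G$, $Z$, and $\bar G$, evaluate $\varphi$ and $\psi$ on elements in that normal form, and solve kernel and preimage problems for homomorphisms between finitely generated abelian groups. Each of these is classical for explicit presentations of finitely generated nilpotent groups, but they must be carefully threaded so that the presentations of $\bar G$, $Z$, and the induced endomorphisms $\bar\varphi, \bar\psi$ passed to the recursive call are themselves produced effectively.
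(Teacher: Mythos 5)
Your argument is correct and follows essentially the same route as the paper: induct on the nilpotency class, pass to the quotient by the last nontrivial term $C=\gamma_c(G)$ of the lower central series, and observe that on the preimage of the equalizer the defect $g \mapsto (g\varphi)(g\psi)^{-1}$ lands in the central subgroup $C$ and is a homomorphism, reducing everything to kernel computations for homomorphisms into finitely generated abelian groups. The only difference is organizational: the paper computes the kernel of this single homomorphism $\mu$ on the full preimage of $\mathrm{Eq}_{\bar\varphi,\bar\psi}(G/C)$, whereas you factor the same computation into finding $E\cap Z=\ker(\varphi-\psi)|_Z$ together with the kernel of the induced map $\delta\colon \bar E \to Z/(\varphi-\psi)(Z)$, which is $\mu$ read modulo $\mu(C)$.
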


\begin{equation}
\label{eq:2} Eq_{\varphi , \psi } (G) = \{ x \in G | x\varphi  = x
\psi \}.
\end{equation}

\begin{proof} Let $G$ be abelian. Then $Eq_{\varphi , \psi } (G) =
ker (\varphi - \psi ),$ thus there is a standard procedure to find a
generating set for it.

Let $G$ be a finitely generated nilpotent group of class $c+1 \geq 2.$ Suppose by induction 
that there is an algorithm which finds for  any  finitely generated nilpotent
group $H$ of class $\leq c$ and any pair of endomorphisms $\alpha , \beta \in End(H)$ 
a finite set of generators of the equalizer $Eq_{\alpha , \beta }(H).$

Let $C = \gamma _c(G)$ be the last non trivial member of the lower
central series of  $G.$ Then the quotient $H = G/C$ has class $c.$
Since $C$ is invariant for every endomorphism of $G$ we can
consider the induced by $\varphi , \psi \in End(G)$ endomorphisms $\bar{\varphi} , \bar{\psi} \in End(H).$
By the
assumption we can construct a finite set of generators of the equalizer
$Eq_{\bar{\varphi} , \bar{\psi} }(H) \leq H.$ Let $G_1$ be the full preimage
of $Eq_{\bar{\varphi }, \bar{\psi }}(H)$ in $G.$ We call $G_1$  a $C-$equalizer
of $\varphi , \psi , $ and we write $G_1 = Eq_{C,
\varphi , \psi }(G).$ By definition

\begin{equation}
\label{eq:3}
 Eq_{C, \varphi , \psi }(G) = \{g \in G | g\varphi = c_g(g \psi ), \textrm{where} \  
 c_g  \in C \}.
 \end{equation}

\noindent
 Obviously, $Eq_{\varphi , \psi }(G) \leq Eq_{C, \varphi
, \psi }(G),$ and $C \leq Eq_{C, \varphi , \psi }(G).$

Now we define a map
\begin{equation}
\label{eq:4}
 \mu : Eq_{C, \varphi , \psi }(G) \rightarrow C  \   \textrm{by} \   \mu (g)
= c_g.
\end{equation}

Easily to see that this map $\mu $ is a homomorphism, and that the derived
subgroup $(Eq_{C, \varphi , \psi }(G))'$ lies in $ker(\mu ).$

We conclude that $Eq_{\varphi , \psi }(G) = ker(\mu ).$ So, a generating set for $Eq_{\varphi , \psi }(G)$
can be derived by the standard procedure.

\end{proof}

\section{The twisted conjugacy problem for pairs of endomorphisms}
\label{se:intro}

Let $G$ be a finitely generated group, and $\varphi , \psi \in
End(G)$ be any pair of endomorphisms. Let $u \sim_{\varphi , \psi
} v $ be a pair of $\varphi , \psi -$twisted conjugate elements
of $G.$ We write $\{u\}_{\varphi , \psi }$ for the $\varphi , \psi
-$twisted conjugacy class of element $u \in G.$

The question about $\varphi , \psi -$twisted conjugacy of given
elements $u, v \in G$ can be reduced to the case where one of the
elements is trivial. To do this we change $\varphi $ to $\varphi ' =
\varphi \circ \sigma_u,$ where $\sigma_u \in Aut(G)$ is the inner automorphism $ h \mapsto u^{-1}hu.$ 
Hence $(x\varphi )u = v(x\psi )$ if and only if

\begin{equation}
\label{eq:6} x\varphi ' = w (x\psi ),
\end{equation}

\noindent where $w = u^{-1} v.$

Now we are ready to prove our main result about twisted conjugacy
in finitely generated nilpotent groups.

\medskip
\begin{theorem}
\label{th} Let $G$ be a finitely generated nilpotent group of
class $c \geq 1.$ Then there exists an algorithm which  decides
the twisted conjugacy problem for any pair of
endomorphisms $\varphi , \psi \in End(G).$
\end{theorem}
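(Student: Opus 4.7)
The plan is to prove the theorem by induction on the nilpotency class $c$, simultaneously strengthening the statement so that whenever a solution exists one can be effectively produced (not merely its existence certified).

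In the base case $c=1$, $G$ is a finitely generated abelian group and equation (\ref{eq:6}), written additively, reads $x(\varphi-\psi)=w$. Deciding membership of $w$ in the image of $\varphi-\psi$ and, if so, producing a preimage, is a standard Smith normal form computation.

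For the inductive step, assume the strengthened statement for class $\leq c$ and let $G$ have class $c+1$. Let $C=\gamma_{c+1}(G)$; this subgroup is central in $G$ and invariant under every endomorphism, so $\varphi,\psi$ descend to $\bar\varphi,\bar\psi\in End(H)$, where $H=G/C$ has class $\leq c$. By the reduction of Section \ref{se:intro} we may assume the equation has the form $x\varphi=w(x\psi)$. Apply the inductive algorithm to the projected equation $\bar x\bar\varphi=\bar w(\bar x\bar\psi)$ in $H$. If it has no solution, neither does the original. Otherwise extract a particular solution $\bar x_0$, lift it to some $x_0\in G$, and compute the element
\[
c_0:=(w\cdot x_0\psi)^{-1}(x_0\varphi),
\]
which by construction lies in $C$.

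The key observation is that any solution $x\in G$ projects to a solution in $H$, hence belongs to the coset $x_0\cdot G_1$, where $G_1=Eq_{C,\varphi,\psi}(G)$. Writing $x=x_0g$ with $g\in G_1$, substituting into (\ref{eq:6}), and using that $C$ is central in $G$ and that the values of the homomorphism $\mu:G_1\to C$ from the proof of Proposition \ref{pr:1} lie in $C$, the equation collapses to the single condition $\mu(g)=c_0^{-1}$. Proposition \ref{pr:1} applied to $H$ supplies a finite generating set of $Eq_{\bar\varphi,\bar\psi}(H)$, which together with a generating set of $C$ yields a finite generating set of $G_1$; evaluating $\mu$ on these generators produces a finite generating set of $Im(\mu)\leq C$. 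Deciding whether $c_0^{-1}\in Im(\mu)$, and producing a preimage $g$ if so, is then a routine computation in the finitely generated abelian group $C$; the element $x_0g$ is an explicit solution in $G$.

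The main technical point to watch is that the induction must carry effective production of a witness $\bar x_0$ rather than merely a yes/no decision, because both the computation of $c_0$ and the final recombination $x=x_0g$ require an explicit lift. This is not actually an obstacle: the abelian base case is constructive, Proposition \ref{pr:1} is already effective, and the inductive step above is constructive given an explicit presentation of $G$ (coset representatives in $G/C$ lift back to $G$ routinely in finitely generated nilpotent groups). Hence the strengthened induction closes and the theorem follows.
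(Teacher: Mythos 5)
Your proof is correct and follows essentially the same route as the paper's: induct on the nilpotency class, project modulo the last nontrivial term $C$ of the lower central series, use Proposition \ref{pr:1} to obtain generators of the $C$-equalizer, and reduce to testing membership of $c_0^{-1}$ in the image of the homomorphism $\mu$ inside the finitely generated abelian group $C$. The only place you go beyond the paper is in explicitly strengthening the induction to produce a witness $\bar{x}_0$ --- the paper's argument also needs this (to compute the element it calls $c$) but leaves it implicit --- and your treatment of that point is correct.
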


\begin{proof} Induction by $c.$ For $c = 1$ (abelian case) the
statement is obviously true.

Suppose that the statement is true in the case of any finitely generated nilpotent group $N$ of
class $\leq c - 1.$ 

Let $u, v \in G$ be any pair of elements, and $\varphi , \psi \in
End(G)$ be any pair of endomorphisms. We change $\varphi $ to $\varphi ' = \varphi \circ \sigma_u, $ and write the equation (\ref{eq:6}) with $w = u^{-1}v$, as were 
explained above. Since the last non trivial
member $C$ of the lower central series  of $G$ is invariant for
every endomorphism,  we decide the twisted conjugacy  problem in $G/C$ with respect to the 
induced by $\varphi ', \psi $ endomorphisms $\bar{\varphi} , \bar{\psi } \in End(G/C)$ and the induced by $w$ element $\bar{w} \in G/C.$. 
More exactly, we decide if there exists an element $\bar{x} \in G/C$ for which

\begin{equation}
\label{eq:8} \bar{x} \bar{\varphi }' = \bar{w} (\bar{x} \bar{\psi} ).
\end{equation}

By our assumption we can decide this problem effectively. 
If such element $\bar{x}$ does not exist the element $x$ does not exist too.

Suppose that $\bar{x}$ exists. Then there is an element $x_1
\in G$ for which

\begin{equation}
\label{eq:9} x_1 \varphi ' = c g (x_1 \psi ),
\end{equation}

\noindent where $c \in C.$ If $x_2 \varphi '  = c' g (x_2 \psi )$
for some element $x_2 \in G$ and $c' \in C,$  we derive that

\begin{equation}
\label{eq:10} (x_2^{-1}x_1)\varphi ' = c'' ((x_2^{-1}x_1)\psi ),
\end{equation}

\noindent
where $c'' = (c')^{-1} c \in  C.$ Thus $x_2^{-1}x_1 \in
Eq_{C, \varphi ', \psi  }(G).$ 
In the case when $x_2 = x$ is a solution of (\ref{eq:6}) we have $c'' = c.$
Conversely, the equality $c'' = c$ means that there is a solution $x$ of (\ref{eq:6}).

By Proposition \ref{pr:1} we construct a finite generating set of
$Eq_{\bar{\varphi }', \bar{\psi }}(G/C),$ and so  we can construct a finite
generating set of its full preimage:

\begin{equation}
\label{eq:11} Eq_{C, \varphi , \psi } (G) = gp(g_1, ..., g_l).
\end{equation}

Thus we have

\begin{equation}
\label{eq:12} g_i \varphi = c_i (g_i \psi ),
\end{equation}

\noindent where $c_i \in C \  \textrm{for} \  i = 1, ..., l.$

Then we apply a the homomorphism $\mu $ defined by the map $g_i \mapsto c_i$ for $i = 1, ..., l.$.
As we noted above, $1 \sim_{\varphi , \psi } w$ if and only if the element
$c$ belongs to the image $ (Eq_{C, \varphi ', \psi }(G))\mu = gp(c_1, ..., c_l).$
So, our problem is reduced to the membership problem in a finitely
generated abelian group $C.$ It is well known that this problem
is decidable.
\end{proof}

\end{document}